\newtheorem{theorem}{Theorem}
\newtheorem{remark}{Remark}
\newtheorem{IHLQR}{IHLQR}
\newtheorem{problem}{Problem}
\newtheorem{proposition}{Proposition}
\newtheorem{lemma}{Lemma}
\newtheorem{example}{Example}
\newtheorem*{proof*}{Proof}
\def\ps@pprintTitle{%
  \let\@oddhead\@empty
  \let\@evenhead\@empty
  \let\@oddfoot\@empty
  \let\@evenfoot\@oddfoot
}
\begin{document}

\begin{frontmatter}



\title{Lyapunov-like Stability Inequality with an \textit{Asymmetric} Matrix and application to Suboptimal Linear Quadratic Control} 


\author{Avinash Kumar\\
avishimpu@gmail.com} 


\begin{abstract}
The Lyapunov inequality is an indispensable tool for stability analysis in linear control theory. It provides a necessary and sufficient condition for the stability of an autonomous linear-time invariant system in terms of the existence of a symmetric positive-definite Lyapunov matrix. This work proposes a new variant of this inequality in which the constituent Lyapunov matrix is allowed to be asymmetric. After analysing the properties of the proposed inequality for a class of matrices, we derive new results for the stabilisation of linear systems. Subsequently, we utilize the developed results to obtain sufficient conditions for the suboptimal linear quadratic control design problem wherein addition to having an asymmetric Lyapunov matrix, which serves as a design matrix for this problem, we provide a characterization of the cost associated with the computed stabilizing suboptimal control laws. This characterization is done by deriving an expression for the upper bound on cost in terms of the initial conditions of the system. We demonstrate the applicability of the proposed results using two numerical examples- one for suboptimal control design for a linear time-invariant system and another for the consensus (state-agreement) protocol design for a multi-agent system, wherein we see how the asymmetry of the Lyapunov (design) matrix emerges as an inherent requirement for the problem.
\end{abstract}



\begin{keyword}
Lyapunov inequality, stability and stabilisation, asymmetric Lyapunov matrix, suboptimal linear quadratic control.

\end{keyword}

\end{frontmatter}



\section{Introduction}
\label{sec_intro}
Lyapunov theory is a vital tool for stability analysis in control theory, and its importance cannot be overstated. Lyapunov inequality, which results upon application of this theory to the linear time-invariant (LTI) systems with a quadratic positive-definite Lyapunov function, serves as the backbone for studying the stability properties of linear systems. It finds applications not only in the linear control theory but also in the robust and nonlinear control design \cite{khalil}, linear systems with disturbances \cite{khlebnikov2011optimization}, and model predictive control \cite{christofides2011networked}. It is the first linear matrix inequality (LMI) to appear in the control theory \cite{khlebnikov2020linear}; the field of LMIs has been advancing continually since then \cite{boyd1994linear}. The LMIs are a vital tool for the control design techniques today, thanks to easy solvability owing to the availability of computational resources \cite{khlebnikov2015linear}. As per the author's best knowledge, while utilizing Lyapunov inequality, the symmetry of the constituent Lyapunov matrix is always assumed. See the box labeled \say{\textit{The Symmetry Assumption}} for the reasons for this pervasive assumption. This work presents new results for a class of matrices wherein this assumption is dropped.
\begin{framed}
 The reasons for the presence of the symmetry assumption are vivid \cite{terrell2009stability}-
\begin{enumerate}
    \item the nice properties that symmetric matrices offer, namely, possession of real eigenvalues and the property of diagonalisability;
    \item the quadratic function, say $\mathbf{z}^TM \mathbf{z}$ with $\mathbf{z} \in \mathbb{R}^n$ and a symmetric $M \in \mathbb{R}^{n \times n}$, satisfies $\mathbf{z}^TM \mathbf{z}=\mathbf{z}^T M_{sym} \mathbf{z}$ where $M_{sym}=\frac{1}{2} \left( M+M^T \right)$ is the symmetric part of the matrix $M$; and
    \item the notion of the positive/negative-definiteness of the scalar quadratic from $\mathbf{z}^TM \mathbf{z}$ is easily transformed to an equivalent notion of positive/negative-definiteness of the constituent symmetric matrix $M$.
\end{enumerate}
 \begin{center}
\textit{The Symmetry Assumption}
\end{center} 
\vspace{-0.4 cm}
\end{framed}
  Linear-quadratic (LQ) optimal control design is a well-developed theory, wherein the optimal solution is obtained by solving the Riccati equation \cite{athans2007optimal, kumar2019some}. Nevertheless, the optimal solution may not always be implementable, because of the saturation constraints on the input, norm-boundedness requirements on the feedback-gain matrix, because this norm directly augments the maximum control effort \cite{datta2014feedback}, or the structural conditions on the feedback-gain matrix \cite{kumar2023suboptimal}. Thus, it becomes important to compute and analyse the suboptimal solutions for this problem. In the existing literature, to approach this suboptimal control design problem, an upper bound is pre-specified, and then sufficient conditions for the existence of suitable suboptimal control laws are derived \cite{jiao2019suboptimality}. These conditions impose that the initial conditions of the system reside in a certain set (ball); contrariwise, in this work, we develop results utilising the exact value of initial conditions and then argue that this latter technique can be extended straightforwardly to the former approach. Moreover, specifying the upper bound a priori may lead to infeasibilities if the upper bound is not chosen cautiously, especially for the networked control design problems, where the optimal cost is not known. In this work, we develop new results based on the proposed inequality and then utilize them to compute the stabilizing control law as well as the associated upper bound on the cost in terms of the initial conditions of the system \cite{kumar2023suboptimal}. Finally, using suitable numerical examples, we demonstrate the usage of these results to suboptimal LQ control design and to suboptimal consensus protocol design for a multi-agent system. We demonstrate in the latter example that the problem necessitates the computation of an asymmetric Lyapunov matrix, which serves as a design matrix for this problem. In summary, the contributions of this work are as follows.
 \begin{enumerate}
     \item It puts forward a new Lyapunov-like stability inequality for a class of matrices and the properties thereof;
     \item it develops new results for the characterisation of the cost for suboptimal LQ control laws, where the upper-bound on the cost is computed in terms of the initial conditions, instead of specifying it a priori as generally done in the literature.
 \end{enumerate}
 The results proposed in this paper are expected to open up a new line of research in LMIs where the constituent Lyapunov matrices are not necessarily assumed to be symmetric thus allowing the enlargement of the search space for these design matrices. The remainder of the paper is organized as follows. In Section \ref{sec_2}, the background of the Lyapunov inequality, followed by the suboptimal linear quadratic control design problem, is presented; the remarks in this section collect the important observations regarding the results present in the literature, which which partly serve as a basis for the study carried out. Section \ref{sec_3} presents the main results of the paper -namely, a new Lyapunov-like inequality, a result for stabilisation of an LTI system, followed by the quantification of the suboptimal LQ control laws. Section \ref{sec_4} presents the numerical results, followed by the concluding remarks in Section \ref{sec_5}.

\textbf{\textit Notation:} Throughout this manuscript, scalars and scalar-valued functions are denoted by small ordinary alphabets, vectors are denoted by small bold alphabets, and capital ordinary alphabets denote matrices. $M \succ \mathbf{0} (\succeq \mathbf{0}) $ and $M \prec \mathbf{0}(\preceq \mathbf{0}) $  means that the matrix $M \in \mathbb{R}^{n \times n}$ is symmetric positive- (semi-)definite and symmetric negative-(semi-)definite respectively. `{$^\star$}' in the superscript of a quantity denotes the conjugation, `$T$' and `$H$' in the superscript denote transposition and the conjugate transposition respectively. $I_{n \times n}$ denotes an identity matrix of order $n$. A `$^*$' in superscript denotes the optimal counterpart of the quantity under consideration. $trace(M)$, $\lambda(M)$ and $\lambda_i(M)$ denote the trace, an eigenvalue and $i^{th}$ eigenvalue of the matrix $M$ respectively. $Re(m)$ denotes the real part of the quantity $m$.

\section{Background and Problem Formulation}

\label{sec_2}
In this section, we present the background of Lyapunov inequality, existing approaches to the suboptimal LQ control design problem, and the insights into the results that we shall develop, followed by the problem formulation.
\subsection{Lyapunov Inequality}
Let us first revisit the celebrated standard Lyapunov's stability theory as applicable to LTI systems. Consider the LTI autonomous system 
\begin{equation}
\dot{\mathbf{x}}(t)= \underline{A} \mathbf{x}(t)
\label{eq_AS}
\end{equation}
where $\mathbf{x}(t) \in \mathbb{R}^{n }$ and $  \underline{A}\in \mathbb{R}^{n \times n}$. The stability of the system is completely characterized by the signs of the real parts of the eigenvalues of the matrix $ \underline{A}$- if all the eigenvalues have strictly negative real parts, then the matrix $\underline{A}$ is called Hurwitz, and the system in \eqref{eq_AS} is globally asymptotically stable. The following theorem, which is well known in the literature, provides a necessary and sufficient condition for the matrix $ \underline{A}$ to be Hurwitz.
\begin{theorem} \cite{terrell2009stability}
    \label{lyap_thm}
    A matrix $ \underline{A} \in \mathbb{R}^{n \times n}$ is Hurwitz if and only if there exists a symmetric positive-definite matrix $P \succ \mathbf{0} $ such that 
    \begin{equation}
     \underline{A}^TP+P \underline{A} \prec \mathbf{0}.
    \label{eq_lya_o}
    \end{equation}
\end{theorem}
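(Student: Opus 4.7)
The plan is to prove the two implications separately, handling sufficiency via a direct Lyapunov-function argument and necessity via an explicit integral construction of $P$.

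For sufficiency, I would assume $P \succ \mathbf{0}$ with $\underline{A}^T P + P \underline{A} \prec \mathbf{0}$ and consider the candidate Lyapunov function $V(\mathbf{x}) = \mathbf{x}^T P \mathbf{x}$. Positive-definiteness and radial unboundedness of $V$ follow immediately from $P \succ \mathbf{0}$. Differentiating along trajectories of \eqref{eq_AS} gives $\dot{V}(\mathbf{x}) = \mathbf{x}^T(\underline{A}^T P + P \underline{A})\mathbf{x}$, which is strictly negative for $\mathbf{x} \neq \mathbf{0}$ by hypothesis. The standard Lyapunov argument then yields global asymptotic stability of \eqref{eq_AS}, which for an LTI system is equivalent to $\underline{A}$ being Hurwitz.

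For necessity, assume $\underline{A}$ is Hurwitz. The natural construction is to fix any $Q \succ \mathbf{0}$ (e.g., $Q = I_{n\times n}$) and define
\begin{equation*}
P \;=\; \int_0^{\infty} e^{\underline{A}^T t}\, Q\, e^{\underline{A} t}\, dt .
\end{equation*}
The Hurwitz property guarantees exponential decay of $\|e^{\underline{A}t}\|$, so the integral converges absolutely. Symmetry of $P$ is immediate; positive-definiteness follows from $\mathbf{z}^T P \mathbf{z} = \int_0^\infty \|Q^{1/2} e^{\underline{A}t} \mathbf{z}\|^2 dt$, which is strictly positive for $\mathbf{z} \neq \mathbf{0}$ since $e^{\underline{A}t}\mathbf{z}$ is not identically zero. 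Using $\frac{d}{dt}\bigl[e^{\underline{A}^T t} Q e^{\underline{A} t}\bigr] = \underline{A}^T e^{\underline{A}^T t} Q e^{\underline{A} t} + e^{\underline{A}^T t} Q e^{\underline{A} t}\underline{A}$ and integrating from $0$ to $\infty$, the boundary term at infinity vanishes by Hurwitz-ness, leaving $\underline{A}^T P + P \underline{A} = -Q \prec \mathbf{0}$, as required.

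Since this is a classical result, there is no serious technical obstacle; the only subtlety worth emphasizing is the necessity direction, where the convergence of the integral and the identification of its derivative both hinge critically on the Hurwitz assumption. I would therefore spend most of the care in justifying integrability of the matrix exponential and in handling the boundary evaluation $\lim_{t \to \infty} e^{\underline{A}^T t} Q e^{\underline{A} t} = \mathbf{0}$, after which the identification $\underline{A}^T P + P \underline{A} = -Q$ is purely algebraic.
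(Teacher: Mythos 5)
Your proof is correct, and the sufficiency half coincides exactly with the paper's argument: the paper's entire proof is a one-line sketch invoking the Lyapunov function $V(\mathbf{x}) = \mathbf{x}^T P \mathbf{x}$ and the negativity of its derivative along trajectories, which is precisely your first paragraph made explicit. Where you go beyond the paper is the necessity direction: the paper's sketch does not address the ``only if'' claim at all (it only shows that existence of $P$ implies $\underline{A}$ is Hurwitz), whereas you supply the classical integral construction $P = \int_0^{\infty} e^{\underline{A}^T t} Q e^{\underline{A} t}\, dt$, verify convergence, symmetry, positive-definiteness, and the algebraic identity $\underline{A}^T P + P\underline{A} = -Q$. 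Since the theorem is stated as an equivalence, your version is the complete proof and the paper's is only half of one (defensibly so, since it cites a textbook); there is nothing to correct in your argument.
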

\begin{proof}
\normalfont
    The proof follows by choosing the global radially unbounded Lyapunov function $V(\mathbf{x}(t))=\mathbf{x}^T(t) P \mathbf{x}(t)$ where $P \in \mathbb{R}^{n \times n}$ is a symmetric positive-definite matrix and subsequently ensuring the strict negativity of the quantity $\frac{\partial V (\mathbf{x}(t))}{\partial \mathbf{x}(t)} \underline{A} \mathbf{x}(t)$  along the trajectories of the system.
\end{proof}
The inequality \eqref{eq_lya_o} is known as Lyapunov inequality in the literature and is a celebrated result in linear control theory.

\begin{remark}
\label{rem1}
  The Lyapunov matrix $P$  in Theorem \ref{lyap_thm} is intrinsically presumed to be symmetric, and this presumption persists in all the results that emanate from or are based around the Lyapunov inequality \eqref{eq_lya_o}.
\end{remark}
It is the symmetry assumption mentioned in Remark \ref{rem1}, which is prevalent due to reasons mentioned in Introduction (\textit{\say{The Symmetry Assumption}}), that this work attempts to mitigate. We shall see that the matrix $P$ is indeed a design matrix for the suboptimal linear quadratic problems- we shall have more to say about this when we introduce the new inequality and develop new results based upon this new inequality for suboptimal LQ control design.

\subsection{Suboptimal LQ Control Design}
Let us first revisit the standard linear-quadratic regulation (LQR) problem as applicable to the linear-time invariant systems.
\begin{IHLQR} \label{sec2_ihlqr}
{(Infinite-horizon LQR)}\\ 
{Compute the optimal feedback control law $\mathbf{u}^*(t)$ which minimizes the quadratic performance index:
\begin{equation}
\label{sec2_ihlqr_cost}
J=   \int_{t_0}^{ \infty} \left( \mathbf{x}^T(t)Q{\mathbf{x}(t)}+\mathbf{u}^T(t)R\mathbf{u}(t) \right) dt 
\end{equation}
subject to the system dynamics
\begin{align}
     &\dot{\mathbf{x}}(t)=A\mathbf{x}(t)+B\mathbf{u}(t) ; \label{eq_sys}\\
     \text{with initial conditions }& \mathbf{x}(t_0)=\mathbf{x}_0, \label{eq_ic}
\end{align}
and drives the states of the system to zero.
Here, $\mathbf{x}(t) \in \mathbb{R}^n$, $\mathbf{u}(t) \in \mathbb{R}^m$, the matrices $A$, $B$, $Q$ and $R$ are constant (time-invariant) with $Q \succeq \mathbf{0} \text{ and } R  \succ \mathbf{0}$.
}
\end{IHLQR}
The optimal solution for the IHLQR problem is given by 
$ 
\mathbf{u}^*(t) = -K \mathbf{x}(t)=-R^{-1} B^T P \mathbf{x}(t)
$ 
where $P \succ \mathbf{0}$ is the solution of the algebraic Riccati equation
$
A^TP+PA+Q - PBR^{-1}B^TP= \mathbf{0}.
$
As explained in the introduction, since the optimal control law cannot always be implemented, due to saturation/norm-boundedness conditions and/or due to structural conditions on the feedback-gain matrix $K$, the suboptimal version of the problem needs to be studied wherein the cost is ensured to be less than a positive bound. The following version of the suboptimal LQ control design is generally addressed in the literature.
    
\begin{problem}[$\gamma$-Suboptimal LQR]

Compute a control law of the form 
\begin{equation}
\mathbf{u}(t)= -R^{-1} B^{T} P \mathbf{x}(t)
\label{eq_u}
\end{equation}
such the system \eqref{eq_sys} with initial conditions \eqref{eq_ic} is regulated and the cost \eqref{sec2_ihlqr_cost} satisfies $J < \gamma.$
\label{sec2_sihlqr}
\end{problem}
The standard result on suboptimal LQR is based upon the Lyapunov inequality  \eqref{eq_lya_o}, as stated in Theorem \ref{thm_trent_slqr}, which we present without proof; see \cite{jiao2019suboptimality} (Theorem $5$) for more details.

\begin{theorem}
\label{thm_trent_slqr}
Let $\gamma>0$. A control law of the form \eqref{eq_u} is $\gamma$-suboptimal for the system \eqref{eq_sys} with initial conditions \eqref{eq_ic} and associated cost \eqref{sec2_ihlqr_cost}, if the matrix $P \succ \mathbf{0}$ satisfies
\begin{align}
    A^TP+PA-PBR^{-1}B^TP+Q \prec \mathbf{0}; \label{exist_slqr_ineq}
    \end{align}
    and the initial conditions \eqref{eq_ic} satisfy
    \begin{equation}
       \mathbf{x}_0^T P \mathbf{x}_0 < \gamma.  
       \label{eq_ic_set}
    \end{equation}
\end{theorem}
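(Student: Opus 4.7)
The plan is to use the standard Lyapunov-based completion-of-squares argument that underlies most results of this type. First I would substitute the candidate control law $\mathbf{u}(t) = -R^{-1}B^T P \mathbf{x}(t)$ into the system dynamics \eqref{eq_sys} to obtain the closed-loop dynamics $\dot{\mathbf{x}}(t) = (A - BR^{-1}B^T P)\mathbf{x}(t)$, and take the candidate Lyapunov function $V(\mathbf{x}(t)) = \mathbf{x}^T(t) P \mathbf{x}(t)$, which is admissible since $P \succ \mathbf{0}$.

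Next I would compute $\dot{V}$ along closed-loop trajectories, obtaining $\dot{V}(\mathbf{x}(t)) = \mathbf{x}^T(t)\bigl(A^T P + P A - 2 P B R^{-1} B^T P\bigr)\mathbf{x}(t)$, and then add the running cost $\mathbf{x}^T Q \mathbf{x} + \mathbf{u}^T R \mathbf{u} = \mathbf{x}^T\bigl(Q + P B R^{-1} B^T P\bigr)\mathbf{x}$ coming from the chosen feedback. Collecting terms yields
\begin{equation*}
\dot{V}(\mathbf{x}(t)) + \mathbf{x}^T(t) Q \mathbf{x}(t) + \mathbf{u}^T(t) R \mathbf{u}(t) = \mathbf{x}^T(t)\bigl(A^T P + P A - P B R^{-1} B^T P + Q\bigr)\mathbf{x}(t),
\end{equation*}
which is strictly negative by assumption \eqref{exist_slqr_ineq}. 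Integrating from $t_0$ to $\infty$ would then give $J < V(\mathbf{x}_0) - \lim_{t \to \infty} V(\mathbf{x}(t))$, and combining with \eqref{eq_ic_set} produces the desired $J < \mathbf{x}_0^T P \mathbf{x}_0 < \gamma$.

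The main obstacle is justifying that the closed-loop system is asymptotically stable, so that regulation indeed holds and $\lim_{t \to \infty} V(\mathbf{x}(t)) = 0$. To address this I would rewrite \eqref{exist_slqr_ineq} in closed-loop form as
\begin{equation*}
(A - B R^{-1} B^T P)^T P + P (A - B R^{-1} B^T P) + \bigl(Q + P B R^{-1} B^T P\bigr) \prec \mathbf{0}.
\end{equation*}
Since $Q \succeq \mathbf{0}$ and $P B R^{-1} B^T P \succeq \mathbf{0}$, dropping this nonnegative term preserves the strict inequality, yielding a classical Lyapunov inequality for the closed-loop matrix with the symmetric positive-definite certificate $P$. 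Invoking Theorem \ref{lyap_thm} then certifies that $A - B R^{-1} B^T P$ is Hurwitz, so $\mathbf{x}(t) \to \mathbf{0}$ and hence $V(\mathbf{x}(t)) \to 0$, closing the argument. No step should require novel machinery beyond what is already stated in the excerpt.
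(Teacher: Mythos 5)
Your proof is correct and complete: the identity $\dot{V}+\mathbf{x}^TQ\mathbf{x}+\mathbf{u}^TR\mathbf{u}=\mathbf{x}^T(A^TP+PA-PBR^{-1}B^TP+Q)\mathbf{x}$ checks out, and your closed-loop rewriting of \eqref{exist_slqr_ineq} correctly yields a standard Lyapunov inequality certifying that $A-BR^{-1}B^TP$ is Hurwitz, which justifies both regulation and $V(\mathbf{x}(t))\to 0$ before integrating. The paper itself states this theorem without proof (deferring to Theorem 5 of the cited reference), and your argument is exactly the standard completion-of-squares/Lyapunov derivation that that reference uses, so there is nothing to add.
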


It is evident from Theorem \ref{thm_trent_slqr} that the matrix $P$ is indeed a design matrix for the suboptimal control laws of the form \eqref{eq_u} and is computed by solving the LMI \eqref{exist_slqr_ineq}. Moreover, the initial conditions are restricted to reside in a certain set \eqref{eq_ic_set} as dictated by the upper bound $\gamma$ which has been specified beforehand. Subsequently, Remark \ref{rem2} and Remark \ref{rem3} are worth noting regarding Theorem \ref{thm_trent_slqr}.
\begin{remark}[Symmetry of \text{$P$}]
 \label{rem2}
   The design matrix $P$ in the control law \eqref{eq_u} is assumed to be symmetric (this is inherited from Theorem \ref{lyap_thm} (see Remark \ref{rem1})) and thus limits the search space for $P$ to the set of symmetric positive-definite matrices.
\end{remark}
\begin{remark}[$\gamma$-Specification] 
\label{rem3}
The upper bound $\gamma>0$ has been pre-specified which restricts the initial conditions to reside in a certain set, and hence an incautious selection of $\gamma$ may lead to infeasibilities, a scenario quite possible in structured control design problems where the optimal cost is not known. (See \cite[Example $2$]{kumar2023suboptimal}).
\end{remark}
In this work, we propose results wherein the symmetry assumption on the design matrix $P$, as per Remark \ref{rem1}, is not imposed. This allows to search for the matrix $P$ over a wider set rather than restricting to the the set of symmetric positive-definite matrices. We develop these results for the case when the matrix $ \underline{A}$ in \eqref{eq_lya_o} is symmetric and hence propose a new LMI, which we call the Lyapunov-like Stability Inequality (\eqref{eq1_llsi}), where the symmetry assumption on the design matrix $P$ is dropped.
\begin{equation}
    \underline{A}P +P^T \underline{A} \prec \mathbf{0}.   \tag{LSI}
    \label{eq1_llsi}
\end{equation}

 It is easily verified that for a symmetric $P$ the inequality reduces to the Lyapunov inequality \eqref{eq_lya_o} with a symmetric $\underline{A}$. Furthermore, \eqref{eq1_llsi} is different from the second equivalent version of the Lyapunov inequality- $\underline{A}P+P\underline{A}^T \prec \mathbf{0}$- which has also been explored in the literature \cite{chilali1996h}. We use LSI, wherein $P$ is not necessarily symmetric, to establish the results for suboptimal control design, this scheme is in-line with the Remark \ref{rem2}. Furthermore, as mentioned in Remark \ref{rem3}, for the application of the existing results in the literature for this problem, one needs to specify the quantity $\gamma$ at the outset, whereas in the proposed results we characterize the suboptimality of the stabilizing control laws of the form \eqref{eq_u} by computing the upper bound on the cost in terms of the initial conditions of the system. Correspondingly, the problem under consideration in this work is Problem \ref{prob_2} as stated next.

 \begin{problem}{($\bar{\gamma} (\mathbf{x}_0)-$Suboptimal LQ Control)}
\label{prob_2}
Compute 
\begin{itemize}
    \item a control law of the form \eqref{eq_u}, where the matrix $P$ is not necessarily symmetric, and 
    \item the parameter $\bar{\gamma} (\mathbf{x_0} )>0$,
\end{itemize}  
such the system \eqref{eq_sys} is regulated and the associated cost \eqref{sec2_ihlqr_cost} satisfies $J < \bar{\gamma} (\mathbf{x_0} ).$
\end{problem}
We argue in the next section that a solution to Problem \ref{prob_2} can be easily cast to provide a solution to Problem \ref{sec2_sihlqr} if the bounds on the initial conditions (like \eqref{eq_ic_set}) are known.
 
\section{Main Results}
 \label{sec_3}
 
\subsection{Lyapunov-like Stability Inequality (LSI)}
 The next result is one of the important results of this work where we study one of the properties of the matrices satisfying \eqref{eq1_llsi}.

\begin{proposition}
\label{prop_asymm_P}
    Assume that ${\underline{A}} \in \mathbb{R}^{n \times n}$ is a symmetric negative-definite (and hence Hurwitz) matrix. If there exists a matrix ${P} \in \mathbb{R}^{n \times n}$ satisfying \eqref{eq1_llsi}, then $ Re( \lambda_i({P})) > 0$ $\forall$ $i \in \{1,2,3,\hdots, n\}$ and hence $trace(P)>0$.
\end{proposition}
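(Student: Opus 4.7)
The plan is to work directly with the (possibly complex) eigendata of $P$ and test the LSI against them as a Hermitian form. Let $\lambda \in \mathbb{C}$ be any eigenvalue of $P$ with eigenvector $\mathbf{v} \in \mathbb{C}^n \setminus \{0\}$, so $P\mathbf{v}=\lambda \mathbf{v}$. Since $\underline{A}$ is symmetric, the matrix $\underline{A}P + P^T \underline{A}$ is symmetric, so by the standard extension of definiteness to complex vectors, the assumption \eqref{eq1_llsi} implies $\mathbf{v}^H(\underline{A}P + P^T \underline{A})\mathbf{v}<0$.

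The next step is to compute the two pieces of this Hermitian form. The first piece is immediate: $\mathbf{v}^H \underline{A} P \mathbf{v} = \lambda\, \mathbf{v}^H \underline{A} \mathbf{v}$. For the second piece I would use that $P$ is real, so taking complex conjugates of $P\mathbf{v}=\lambda\mathbf{v}$ yields $P\bar{\mathbf{v}} = \bar\lambda\,\bar{\mathbf{v}}$, hence $\mathbf{v}^H P^T = (P\bar{\mathbf{v}})^T = \bar\lambda\,\mathbf{v}^H$, giving $\mathbf{v}^H P^T \underline{A} \mathbf{v} = \bar\lambda\, \mathbf{v}^H \underline{A} \mathbf{v}$. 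Adding the two yields
\begin{equation*}
2\,\mathrm{Re}(\lambda)\cdot \mathbf{v}^H \underline{A} \mathbf{v} < 0.
\end{equation*}
Because $\underline{A}$ is symmetric and real, $\mathbf{v}^H \underline{A} \mathbf{v}$ is a real number, and by the hypothesis $\underline{A}\prec \mathbf{0}$ it is strictly negative. Dividing through therefore forces $\mathrm{Re}(\lambda)>0$, and since $\lambda$ was an arbitrary eigenvalue of $P$ this establishes $\mathrm{Re}(\lambda_i(P))>0$ for every $i$.

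For the trace claim I would invoke that $\operatorname{trace}(P)$ equals the sum of the eigenvalues counted with algebraic multiplicity, and that the characteristic polynomial of the real matrix $P$ has real coefficients so any non-real eigenvalues occur in complex-conjugate pairs. Hence $\operatorname{trace}(P)$ is real and equals $\sum_i \mathrm{Re}(\lambda_i(P))$, which is a sum of strictly positive reals and therefore strictly positive.

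I do not anticipate a real obstacle: the proof is a one-shot eigenvector test, and the only care needed is the bookkeeping with complex conjugates (the fact that $P$ is real is what turns $\mathbf{v}^H P^T$ into $\bar\lambda\,\mathbf{v}^H$). The role of the symmetry of $\underline{A}$ is twofold, namely to make $\underline{A}P + P^T \underline{A}$ a bona fide Hermitian form and to guarantee that $\mathbf{v}^H \underline{A} \mathbf{v}$ is a real (negative) scalar that can be cancelled to isolate $\mathrm{Re}(\lambda)$; losing either property would break the argument.
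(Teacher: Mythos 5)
Your proof is correct and follows essentially the same route as the paper: test the negative-definite Hermitian form $\mathbf{v}^H(\underline{A}P+P^T\underline{A})\mathbf{v}$ against an eigenvector of $P$, use the realness of $P$ to turn $\mathbf{v}^HP^T$ into $\lambda^{\star}\mathbf{v}^H$, and cancel the strictly negative scalar $\mathbf{v}^H\underline{A}\mathbf{v}$ to isolate $\mathrm{Re}(\lambda)>0$. Your added bookkeeping (why the form stays real-definite on complex vectors, and the conjugate-pair argument for the trace) only makes explicit what the paper leaves implicit.
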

 
     \begin{proof}
     \normalfont
     Let $\lambda$ be an arbitrary (possibly complex) eigenvalue of the matrix $P$ with an associated (possibly complex) eigenvector $\mathbf{v}$. Then
     \begin{align*}
         \underline{A}P+P^T \underline{A} & \prec \mathbf{0}\\
         \implies \mathbf{v}^H \left( \underline{A}P+P^T \underline{A} \right) \mathbf{v} &< 0\\
         \implies \mathbf{v}^H  \underline{A} P \mathbf{v} + \left( P \mathbf{v} \right)^H \underline{A} \mathbf{v} &<0 \\
         \implies \mathbf{v}^H \underline{A} \lambda \mathbf{v}+ \lambda^{\star}  \mathbf{v}^H \underline{A} \mathbf{v} & < 0\\
         \implies \left( \lambda+\lambda^{\star} \right) \mathbf{v}^H \underline{A} \mathbf{v} &< 0 \implies Re(\lambda)>0.
     \end{align*}
  Since, $\lambda$ is an arbitrary eigenvalue of $P$, we have that $Re(\lambda_i(P))>0$ $\forall i \in \{1,2,3, \hdots n\}$.
 \end{proof}
 
The next lemma shall be used for the quantification of the suboptimal control laws that we obtain in a later subsection.

 \begin{lemma}
    \label{lemma_t_ext}
Assume that ${\underline{A}} \in \mathbb{R}^{n \times n}$ is a symmetric negative-definite matrix. Suppose that there exist matrices $\tilde{P}\in \mathbb{R}^{n \times n}$ and $\hat{P}\in \mathbb{R}^{n \times n}$ satisfying 
\begin{align}
&{\underline{A}} \tilde{P}+ \tilde{P}^T {\underline{A}} -W \prec \mathbf{0} \text{ and},\label{ineqn_P_bar}\\
&{\underline{A}}\hat{P}+\hat{P}^T{\underline{A}}+W = \mathbf{0}\label{eqn_P};
\end{align}
where $W \succ \mathbf{0}$. Then $Re (\lambda_i(\tilde{P}+\hat{P})) >  {0}$ $\forall i \in \{1,2,3,  \hdots n\}$ and hence $trace \left(\tilde{P}+\hat{P} \right)>0$.
\end{lemma}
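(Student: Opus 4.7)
The plan is to reduce this lemma to a direct application of Proposition \ref{prop_asymm_P}. The key observation is that the hypotheses \eqref{ineqn_P_bar} and \eqref{eqn_P} are perfectly aligned so that adding them causes the common matrix $W$ to cancel exactly; what remains is precisely the LSI inequality \eqref{eq1_llsi} applied to the composite matrix $\tilde{P}+\hat{P}$.

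First I would rewrite \eqref{ineqn_P_bar} as $\underline{A}\tilde{P}+\tilde{P}^T\underline{A} \prec W$ and \eqref{eqn_P} as $\underline{A}\hat{P}+\hat{P}^T\underline{A} = -W$. Adding the two relations and using the elementary fact that adding a strict negative-definite quantity and an equality of symmetric matrices preserves the strict inequality, I obtain
\begin{equation*}
\underline{A}(\tilde{P}+\hat{P}) + (\tilde{P}+\hat{P})^T \underline{A} \prec \mathbf{0}.
\end{equation*}
Hence the matrix $P := \tilde{P}+\hat{P}$ satisfies \eqref{eq1_llsi} with the same symmetric negative-definite $\underline{A}$.

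Next, I would invoke Proposition \ref{prop_asymm_P} directly on $P=\tilde{P}+\hat{P}$, which yields $Re(\lambda_i(\tilde{P}+\hat{P})) > 0$ for every $i \in \{1,2,\ldots,n\}$. For the trace conclusion, I would note that $\tilde{P}$ and $\hat{P}$ are real matrices, so any complex eigenvalues of $\tilde{P}+\hat{P}$ occur in conjugate pairs; consequently $trace(\tilde{P}+\hat{P}) = \sum_{i=1}^{n} \lambda_i(\tilde{P}+\hat{P}) = \sum_{i=1}^{n} Re(\lambda_i(\tilde{P}+\hat{P})) > 0$.

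Honestly, I do not anticipate a real obstacle here: the lemma is essentially a linearity/superposition statement about LSI. The only thing to be careful about is the mixing of a strict inequality with an equality when adding, and the observation that for a real matrix the trace equals the sum of real parts of the eigenvalues. Once one spots the cancellation of $W$, the result is immediate from Proposition \ref{prop_asymm_P}.
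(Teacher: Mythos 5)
Your proposal is correct and follows exactly the paper's own argument: add \eqref{ineqn_P_bar} and \eqref{eqn_P} so that $W$ cancels, observe that $\tilde{P}+\hat{P}$ then satisfies \eqref{eq1_llsi}, and invoke Proposition \ref{prop_asymm_P}. Your additional remark that the trace of a real matrix equals the sum of the real parts of its eigenvalues (since complex eigenvalues come in conjugate pairs) is a welcome bit of explicitness that the paper leaves implicit.
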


\begin{proof}
\normalfont
Adding \eqref{ineqn_P_bar} and \eqref{eqn_P}, we get
$$
 \underline{A} \left(\hat{P}+\tilde{P} \right) +\left(\hat{P}+\tilde{P} \right)^T \underline{A} \prec \mathbf{0}.
$$
Since $\underline{A}$ is symmetric negative-definite, the result follows from Proposition \ref{prop_asymm_P}.
\end{proof}
 
\subsection{Suboptimal LQ Control Design}
In this subsection, first we derive a new result for the stabilisation of linear system \eqref{eq_sys} with control input \eqref{eq_u}, wherein the design matrix $P\in \mathbb{R}^{n \times n}$ is expressed in terms of newly introduced matrix variables $X \succ \mathbf{0}$ and $Y$, an idea borrowed from \cite{polyak2021linear} (Theorem $3$).
\begin{proposition}[\textbf{\textit{Stabilisation of LTI Systems}}]
\label{prop_stabilising}
 If there exist a matrix $ X \succ \mathbf{0}$, $X \in \mathbb{R}^{n \times n}$ and a matrix $Y \in \mathbb{R}^{n \times n}$ satisfying \eqref{eq_new_lti} then the matrix $A-BR^{-1}B^TP$ is Hurwitz wherein $P= YX^{-1}$.
  \begin{align}
      XA^T+AX-Y^TBR^{-1}B^T-BR^{-1}B^TY \prec \mathbf{0}.
       \label{eq_new_lti}
\end{align}
\end{proposition}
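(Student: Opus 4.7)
The plan is to show that inequality \eqref{eq_new_lti} is, under the substitution $Y = PX$ (equivalently $P = YX^{-1}$), precisely the dual Lyapunov inequality for the closed-loop matrix $A_{cl} := A - BR^{-1}B^T P$ with $X$ playing the role of the (symmetric positive-definite) Lyapunov matrix. Once this equivalence is exhibited, Hurwitzness of $A_{cl}$ will follow from the classical Lyapunov stability theorem (Theorem \ref{lyap_thm}).

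First, I would exploit the fact that $X \succ \mathbf{0}$ is symmetric (so $X^T = X$) and that $R = R^T$. Substituting $Y = PX$, the cross terms in \eqref{eq_new_lti} rewrite as $Y^T B R^{-1} B^T = X P^T B R^{-1} B^T$ and $B R^{-1} B^T Y = B R^{-1} B^T P X$. Inserting these identities into \eqref{eq_new_lti} and regrouping yields
\begin{equation*}
(A - B R^{-1} B^T P)\,X \;+\; X\,(A - B R^{-1} B^T P)^T \;\prec\; \mathbf{0},
\end{equation*}
i.e.\ $A_{cl} X + X A_{cl}^T \prec \mathbf{0}$.

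Second, since $X \succ \mathbf{0}$ is invertible, I would pre- and post-multiply this inequality by $X^{-1} \succ \mathbf{0}$, which preserves the strict negative-definiteness, to obtain
\begin{equation*}
A_{cl}^T X^{-1} \;+\; X^{-1} A_{cl} \;\prec\; \mathbf{0}.
\end{equation*}
This is precisely the Lyapunov inequality \eqref{eq_lya_o} for the matrix $A_{cl}$ with symmetric positive-definite Lyapunov matrix $X^{-1}$. An appeal to Theorem \ref{lyap_thm} then concludes that $A_{cl} = A - B R^{-1} B^T P$ is Hurwitz.

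There is no substantive obstacle beyond bookkeeping of transposes; the decisive conceptual step is the change of variables $Y = PX$ (borrowed from \cite{polyak2021linear}) which linearises what would otherwise be a bilinear matrix inequality in the design matrix $P$. Notably, the argument never requires $P$ to be symmetric --- the Lyapunov matrix invoked at the final step is $X^{-1}$, whose symmetry is built into the hypothesis $X \succ \mathbf{0}$ --- which is exactly what makes the proposition compatible with the asymmetric-$P$ framework developed around \eqref{eq1_llsi}.
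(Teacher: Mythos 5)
Your proposal is correct and follows essentially the same route as the paper: substitute $Y=PX$, recognise the resulting inequality as $A_{cl}X + XA_{cl}^T \prec \mathbf{0}$, perform the congruence transformation by $X^{-1}$ to obtain the standard Lyapunov inequality for $A_{cl}$ with Lyapunov matrix $X^{-1}\succ\mathbf{0}$, and invoke Theorem~\ref{lyap_thm}. Your write-up is in fact slightly more explicit than the paper's (which compresses the transpose bookkeeping into ``subsequent simplification''), but the argument is identical.
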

\begin{proof}
\normalfont
Substituting $Y=PX$ in \eqref{eq_new_lti}, and followed by pre- and post-multiplication with $X^{-1}$ and subsequent simplification gives
 \begin{align*}
  \left(A-BR^{-1}B^TP \right)^TX^{-1}+ X^{-1} \left( A-BR^{-1}B^TP\right)\prec \mathbf{0}.
     \end{align*} 
     Since $X$ and hence $X^{-1}$ is positive-definite, using Theorem \ref{lyap_thm}, it follows that $A-BR^{-1}B^TP$ is Hurwitz.
\end{proof}
 Another result for a LTI system with an associated quadratic cost is stated below and shall be used in proving Theorem \ref{thm1} which is the main result of this subsection. 

\begin{lemma}
\label{lem_cost_lti}
Consider the system \ref{eq_sys} with cost \eqref{sec2_ihlqr_cost}. Assume that the matrix $A-BR^{-1}B^TP$ is Hurwitz. Then the cost $J$ associated with the control input \eqref{eq_u} is finite and is given as $J= \mathbf{x}_0^T Z \mathbf{x}_0$ where $Z \succ \mathbf{0}$ is the unique solution of the equation \eqref{eq_cost_z}.
\begin{align}
    & A^TZ+ZA+Q-P^TBR^{-1}B^TZ  -ZBR^{-1}B^TP+P^TBR^{-1}B^TP=\mathbf{0}.
    \label{eq_cost_z}
\end{align}
\end{lemma}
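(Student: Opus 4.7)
The plan is to substitute the control law into the cost integral to reduce it to a pure state-quadratic form, and then invoke the standard Lyapunov identity for evaluating quadratic cost integrals along Hurwitz linear dynamics.

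First, I would plug $\mathbf{u}(t) = -R^{-1}B^{T}P\mathbf{x}(t)$ into both the dynamics \eqref{eq_sys} and the cost \eqref{sec2_ihlqr_cost}. The closed-loop dynamics become $\dot{\mathbf{x}}(t) = A_{cl}\mathbf{x}(t)$ with $A_{cl} := A - BR^{-1}B^{T}P$, which is Hurwitz by hypothesis, while the integrand collapses to $\mathbf{x}^{T}(Q + P^{T}BR^{-1}B^{T}P)\mathbf{x}$; abbreviate $M := Q + P^{T}BR^{-1}B^{T}P \succeq \mathbf{0}$. Because $A_{cl}$ is Hurwitz, $\mathbf{x}(t) \to \mathbf{0}$ exponentially, so the cost integral is absolutely convergent and finite.

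Next, I would appeal to the standard fact that the Lyapunov equation $A_{cl}^{T}Z + ZA_{cl} + M = \mathbf{0}$ admits a unique solution whenever $A_{cl}$ is Hurwitz (no two eigenvalues sum to zero), given explicitly by $Z = \int_{0}^{\infty} e^{A_{cl}^{T}t} M e^{A_{cl}t}\,dt \succeq \mathbf{0}$. Differentiating $V(t) = \mathbf{x}^{T}(t)Z\mathbf{x}(t)$ along the closed-loop trajectory gives $\dot V = \mathbf{x}^{T}(A_{cl}^{T}Z + ZA_{cl})\mathbf{x} = -\mathbf{x}^{T}M\mathbf{x}$; integrating from $t_0$ to $\infty$ and invoking the exponential decay of $\mathbf{x}(t)$ yields $\mathbf{x}_0^{T}Z\mathbf{x}_0 = \int_{t_0}^{\infty}\mathbf{x}^{T}M\mathbf{x}\,dt = J$. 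Finally, expanding $A_{cl}^{T}Z + ZA_{cl} + M = \mathbf{0}$ using $A_{cl} = A - BR^{-1}B^{T}P$ reproduces exactly \eqref{eq_cost_z}, confirming the characterisation of $Z$.

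The main subtlety -- and the only non-routine step -- is promoting $Z \succeq \mathbf{0}$ to the claimed $Z \succ \mathbf{0}$. The integral representation only delivers positive semi-definiteness; strict definiteness needs the pair $(A_{cl}, M^{1/2})$ to be observable (or equivalently $Q$ to interact detectably with the closed-loop dynamics). I expect this to be an implicit hypothesis of the lemma, inherited from the standard LQR detectability/observability assumptions; if not, the conclusion should be stated as $Z \succeq \mathbf{0}$ instead. Apart from this clarification, the proof is essentially a direct computation.
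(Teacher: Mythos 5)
Your proof is correct and follows essentially the same route as the paper: substitute the feedback law to reduce the cost to $\int \mathbf{x}^T(t)\left(Q+P^TBR^{-1}B^TP\right)\mathbf{x}(t)\,dt$ and evaluate it via the closed-loop Lyapunov equation, a step the paper simply outsources to Lemma 3 of \cite{jiao2019suboptimality} while you derive it from the integral representation. Your caveat about $Z \succ \mathbf{0}$ versus $Z \succeq \mathbf{0}$ is well taken: since only $Q \succeq \mathbf{0}$ is assumed, the integral representation guarantees positive semidefiniteness but strict definiteness requires observability of the pair $\left(A-BR^{-1}B^TP,\ (Q+P^TBR^{-1}B^TP)^{1/2}\right)$, a hypothesis the lemma does not state and which the paper inherits from the cited reference without comment.
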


\begin{proof}
\normalfont
    Substituting \eqref{eq_u} in \eqref{sec2_ihlqr_cost}, we obtain
    $$
    J= \int_{t_0}^{\infty} \left(\mathbf{x}^T(t)  \left(Q+ P^T B R^{-1}B^T P \right)\mathbf{x}(t) \right) dt.
    $$
    Since the closed-loop matrix $A-BR^{-1}B^TP$ is Hurwitz, it follows (\cite{jiao2019suboptimality}(Lemma $3$)) that the cost is $J=\mathbf{x}_0^TZ \mathbf{x}_0$ where $Z \succ \mathbf{0}$ is the unique solution of 
    \begin{align*}
    &\left(A-BR^{-1}B^TP \right)^T Z +Z \left(A-BR^{-1}B^T P \right)  + P^T BR^{-1} B^T P = \mathbf{0}\\
   \implies  & A^TZ+ZA+Q-P^TBR^{-1}B^TZ \nonumber -ZBR^{-1}B^TP+P^TBR^{-1}B^TP=\mathbf{0}. 
    \end{align*} 
\end{proof}
The next result is the most important contribution of this work where we propose sufficient conditions for obtaining solution to Problem \ref{prob_2} by deriving an expression for the upper bound on the cost in terms of the matrix variables and the initial conditions of the system.

\begin{theorem}{(\textbf{Quantification of Suboptimality})}
\label{thm1}
Assume that there exist matrices $X\succ \mathbf{0} \in\mathbb{R}^{n \times n}$, $W \succ \mathbf{0} \in \mathbb{R}^{n \times n}$; and matrices $Y \in \mathbb{R}^{n \times n}$, $\underline{P} \in \mathbb{R}^{n \times n}$, such that:
    \begin{align}
     XA^T+AX-Y^TBR^{-1}B^T-BR^{-1}B^TY \prec \mathbf{0};
     \label{eqn_lmi_stability}
\end{align}  \text{and}
    \begin{align}
\label{eqn_lmi_upp_bd}
    \begin{bmatrix}
   {A}^T \underline{P}+\underline{P}^TA - {Q} +W &  \left( I+ Y^TA\right)^T & \left(\underline{P}^T {B} \right)\\
    \left( I+ Y^TA\right) & X & \mathbf{0}_{n \times m}\\
  \left(  \underline{P}^T {B} \right)^T & \mathbf{0}_{m\times n} & {R}
    \end{bmatrix}\succ \mathbf{0} ;
    \end{align}
Then the matrix ${A}-{B}{R}^{-1}{B}^T{P}$, with $P=YX^{-1}$, is Hurwitz. If in addition  ${A}-{B}{R}^{-1}{B}^T{P}$ is symmetric and $\exists$ $\hat{P}$ such that 
\begin{equation}
\label{eq_Ph}
    \left( {A}-{B}R^{-1}B^T {P} \right) \hat{P}+ \hat{P}^T \left( {A}-{B}R^{-1}B^T  {P} \right)+W=\mathbf{0},
\end{equation}
then the control law \eqref{eq_u} solves Problem \ref{prob_2} with $\bar{\gamma}(\mathbf{x
}_0)= trace\left( {P}  -\underline{P}   + \hat{P} \right) \mathbf{x}_0^T\mathbf{x}_0$.
\end{theorem}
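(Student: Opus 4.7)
The Hurwitz conclusion for the closed-loop matrix $A_{cl} := A - BR^{-1}B^T P$ (with $P = YX^{-1}$) is immediate from Proposition~\ref{prop_stabilising} applied to \eqref{eqn_lmi_stability}. By Lemma~\ref{lem_cost_lti} the resulting cost is $J = \mathbf{x}_0^T Z \mathbf{x}_0$ with $Z \succ \mathbf{0}$ symmetric satisfying \eqref{eq_cost_z}; under the extra hypothesis $A_{cl}=A_{cl}^T$, the matrix $A_{cl}$ is symmetric Hurwitz and hence negative-definite, and \eqref{eq_cost_z} reduces to $A_{cl}Z + Z A_{cl} + Q + P^T B R^{-1} B^T P = \mathbf{0}$. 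The remaining task is therefore to bound $\mathbf{x}_0^T Z \mathbf{x}_0$ above by $trace(P - \underline{P} + \hat{P})\, \mathbf{x}_0^T \mathbf{x}_0$.

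\textbf{Key step: converting \eqref{eqn_lmi_upp_bd} into an $A_{cl}$-based inequality.} I would first substitute $Y = PX$ into \eqref{eqn_lmi_upp_bd} and take the Schur complement with respect to the positive-definite diagonal block $\mathrm{diag}(X,R)$. Using the expansion $(I + Y^T A)^T X^{-1} (I + Y^T A) = X^{-1} + P^T A + A^T P + A^T P X P^T A$ this yields
\[
A^T(P - \underline{P}) + (P - \underline{P})^T A + Q - W + X^{-1} + A^T P X P^T A + \underline{P}^T B R^{-1} B^T \underline{P} \prec \mathbf{0}.
\]
Next I would exploit the symmetry identity $A - A^T = BR^{-1}B^T P - P^T BR^{-1}B^T$ (equivalent to $A_{cl}=A_{cl}^T$) to re-express $A_{cl}(P - \underline{P}) + (P - \underline{P})^T A_{cl}$ in terms of the $A$-form above. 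A direct expansion shows that, after adding $Q + P^T B R^{-1} B^T P - W$ to both sides, the gap between the desired $A_{cl}$-form and the $A^T$-form above is exactly $-(P - \underline{P})^T BR^{-1}B^T (P - \underline{P}) - X^{-1} - A^T P X P^T A$, which is negative-definite. Consequently,
\[
A_{cl}(P - \underline{P}) + (P - \underline{P})^T A_{cl} + Q + P^T B R^{-1} B^T P - W \prec \mathbf{0}.
\]
Adding \eqref{eq_Ph} absorbs the $-W$ and produces, for $M := P - \underline{P} + \hat{P}$,
\[
A_{cl} M + M^T A_{cl} + Q + P^T B R^{-1} B^T P \prec \mathbf{0}.
\]

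\textbf{Closing the bound.} Subtracting the Lyapunov equation $A_{cl} Z + Z A_{cl} + Q + P^T BR^{-1}B^T P = \mathbf{0}$ from the above strict inequality cancels the common cost terms and gives $A_{cl}(M - Z) + (M - Z)^T A_{cl} \prec \mathbf{0}$. Since $A_{cl}$ is symmetric and negative-definite, Proposition~\ref{prop_asymm_P} applied to the (not necessarily symmetric) matrix $M - Z$ then delivers $trace(M - Z) > 0$, i.e., $trace(Z) < trace(P - \underline{P} + \hat{P})$. Combining with the elementary inequality $\mathbf{x}_0^T Z \mathbf{x}_0 \leq \lambda_{\max}(Z)\, \mathbf{x}_0^T \mathbf{x}_0 \leq trace(Z)\, \mathbf{x}_0^T \mathbf{x}_0$ (valid since $Z \succeq \mathbf{0}$) concludes $J < trace(P - \underline{P} + \hat{P})\, \mathbf{x}_0^T \mathbf{x}_0 = \bar{\gamma}(\mathbf{x}_0)$.

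\textbf{Main obstacle.} The technical heart of the argument is the second paragraph: one must recognise that the residue separating the $A^T$-form inequality coming from the Schur complement and the target $A_{cl}$-form inequality is precisely a sum of three positive-semi-definite matrices, $X^{-1}$, $A^T P X P^T A$, and $(P - \underline{P})^T BR^{-1}B^T(P - \underline{P})$. This cancellation is exactly what the particular placement of $X$, $Y$ and $\underline{P}$ inside \eqref{eqn_lmi_upp_bd} is engineered to deliver; without this slack, the strict inequality would not survive the passage to the $A_{cl}$-form and the final comparison with $Z$ via Proposition~\ref{prop_asymm_P} would not close.
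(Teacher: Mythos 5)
Your proof is correct and follows essentially the same route as the paper: Schur complement of \eqref{eqn_lmi_upp_bd} with $Y=PX$, discarding the positive-semidefinite slack terms $X^{-1}$, $A^TPXP^TA$ and $(P-\underline{P})^TBR^{-1}B^T(P-\underline{P})$ to pass to the closed-loop form, then the trace bound via the symmetry of $A-BR^{-1}B^TP$ together with Proposition \ref{prop_asymm_P} and $\lambda_{\max}(Z)\le \mathrm{trace}(Z)$. The only (immaterial) reorganisation is that you subtract the $Z$-Lyapunov equation at the end and apply Proposition \ref{prop_asymm_P} directly to $P-\underline{P}+\hat{P}-Z$, whereas the paper substitutes $-Q$ via \eqref{eq_cost_z} mid-derivation and invokes Lemma \ref{lemma_t_ext}; these are the same algebra in a different order.
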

\begin{proof}
 The Hurwitz property of ${A}-{B}{R}^{-1}{B}^T{P}$ follows from \eqref{eqn_lmi_stability} using Proposition \ref{prop_stabilising}.
Using the Schur complement lemma, LMI \eqref{eqn_lmi_upp_bd}, can be written as below.
    \begin{align*} 
     A^T \underline{P}+ \underline{P}^T A -Q+W- X^{-1}-A^TY X^{-1}-X^{-1}Y^TA-A^T YX^{-1}Y^TA- \underline{P}^TBR^{-1}B^T \underline{P} & \succ \mathbf{0}.
    \end{align*}
    Since $X$ and hence $X^{-1}$ is positive-definite the quantity $A^TYX^{-1}Y^TA$ is positive-definite. With this information and using $P=YX^{-1}$ and $P^T=X^{-1}Y^T$, we obtain
    \begin{align*}
         A^T \left(\underline{P}-P \right)+ \left(\underline{P}-P \right)^T A -Q+W- \underline{P}^T BR^{-1}B^T \underline{P} \succ \mathbf{0}.
    \end{align*}
    Adding the positive-definite term $\left( P- \underline{P}\right)^T B R^{-1}B^T \left(P- \underline{P} \right)$ to the left hand side does not change the sign of the inequality and we get 
\begin{align*}
    A^T \underline{P}+ \underline{P}^TA-Q+W -P^T A -A^T P +P^T BR^{-1}B^TP -P^TB R^{-1}B^T \underline{P}-\underline{P}^TBR^{-1}B^TP \succ \mathbf{0}.
\end{align*}
Substituting for the quantity $-Q$ using \eqref{eq_cost_z}, we have
\begin{align*}
    &A^T \underline{P}+\underline{P}^TA+W-P^TA-A^TP+P^TBR^{-1}B^TP -P^TBR^{-1}B^T \underline{P}\\
    & \hspace{0.4 cm}-\underline{P}^TBR^{-1}B^TP+A^TZ+ZA-P^TBR^{-1}B^TZ-ZBR^{-1}B^TP+P^TBR^{-1}B^TP \succ \mathbf{0}\\
    \implies & A^T \left( \underline{P}-P+Z \right)+ \left(\underline{P}^T-P^T+Z \right)A+W+P^TBR^{-1}B^TP-P^TBR^{-1}B^T \underline{P}\\
    & \hspace{0.4 cm}-\underline{P}^T BR^{-1}B^T P -ZBR^{-1}B^TP-P^TBR^{-1}B^TZ+P^TBR^{-1}B^TP \succ \mathbf{0}\\
    \implies & \left(A -BR^{-1}B^TP\right)^T \left( -P+\underline{P}+Z \right)+\left( -P+\underline{P}+Z \right)^T \left(A-BR^{-1}B^TP \right)+W \succ \mathbf{0}\\
    \implies & \left(A- BR^{-1}B^TP \right)^T\left(P- \underline{P}-Z \right)+ \left( P-\underline{P}-Z \right)^T \left(A-BR^{-1}B^TP \right)-W \prec \mathbf{0}.
\end{align*}
    Since $A-BR^{-1}B^TP$ is symmetric, it follows from Lemma \ref{lemma_t_ext} that
    \begin{align*}
   trace \left(P-\underline{P}-Z +\hat{P} \right) >0  & \implies  trace(Z) < tr(P- \underline{P}+ \hat{P})  \\
    & \implies  \lambda_{max}(Z) < tr(P-\underline{P}+\hat{P} )  \\
     &\implies  \mathbf{x}_0^T Z \mathbf{x}_0 < \lambda_{max}(Z)\mathbf{x}_0^T \mathbf{x}_0  < trace \left(P-\underline{P}+ \hat{P} \right) \mathbf{x}_0^T \mathbf{x}_0.
    \end{align*}
    where the last inequality follows from the fact that the matrix $Z$ is symmetric positive-definite.
    Finally since $J=\mathbf{x}_0^T Z \mathbf{x}_0$ (Lemma \ref{lem_cost_lti}), it follows that $J < trace \left(P-\underline{P}+ \hat{P} \right) \mathbf{x}_0^T \mathbf{x}_0$.
  \end{proof}

\begin{remark}[Applicability of Theorem \ref{thm1}]
\label{rem_4}
   While deploying Theorem \ref{thm1}, the symmetry condition (on the matrix $\underline{A}$ in \eqref{eq1_llsi}) reflects upon the \textit{closed-loop system matrix} $A-BR^{-1}B^TP$ and not upon the open-loop system matrix $A$. Thus, the applicability of the results is not limited to the systems with a symmetric open-loop system matrix $A$.
\end{remark}  
We demonstrate the fact stated in Remark \ref{rem_4} in Example \ref{ex1} where the open-loop system matrix $A$ is not symmetric. As a final observation, as stated in the introduction, generally sufficient conditions for suboptimality are derived for the initial conditions residing in a certain set(see Theorem \ref{thm_trent_slqr}, equation \eqref{eq_ic_set}), instead of carrying out the computation for any arbitrary initial condition as is done in this work. It is easily verified that it is not possible to extend the results of the former approach to the latter but the extension the other way round is straightforward as seen in Remark \ref{rem_last}.

\begin{remark}[Specification of Initial Conditions]
Assume that the initial conditions of the system \eqref{eq_sys} belong to the $\alpha-$ball, that is $\{\mathbf{x}_0: \mathbf{x}_0^T \mathbf{x}_0 <\alpha >0 \}$. Then, Theorem \ref{thm1} gives an upper bound on the cost as $J<  \alpha \ trace\left( {P}  -\underline{P}   + \hat{P} \right).$
\label{rem_last}
\end{remark}
A summarised comparison of the various aspects of the proposed technique with the existing state-of-the-art (SOTA) techniques is presented in Table \ref{tab_1} from the viewpoint of stability analysis and in Table \ref{tab_2} from the viewpoint of suboptimal LQR design.
 
\begin{table}[]
\centering
\begin{tabular}{|c |c| c|} 
 \hline
 Characteristic & Existing (Lyapunov inequality) & Proposed (\eqref{eq1_llsi})  \\ [0.5ex] 
 \hline
 LMI &  $\underline{A}^TP+P\underline{A} \prec \mathbf{0}$ &  $\underline{A}P+P^T\underline{A}\prec \mathbf{0}$   \\ 
 \hline
 Conditions on $P$& $P \succ \mathbf{0}$ & $Re(\lambda_i(P))>0$ $\forall i$  \\
 \hline
 Conditions on $\underline{A}$ & $\underline{A}$ is Hurwitz & $\underline{A} \prec \mathbf{0}$\\
 \hline
 Nature & Necessary and sufficient & Sufficient \\
 \hline
\end{tabular}
\caption{Comparison with SOTA (stability perspective) }
\label{tab_1}
\end{table}

\begin{table*}

\resizebox{\textwidth}{!}{
\begin{tabular}{|c |c| c|} 
 \hline
 Characteristic & Existing (\cite{jiao2019suboptimality}) & Proposed  \\ [0.5ex] 
 \hline
 LMI & 
     $A^TP+PA-PBR^{-1}B^TP+Q \prec \mathbf{0}$ & $ \begin{array} {lcl} &\left(A- BR^{-1}B^TP \right)^T  \left(P- \underline{P}-Z \right)+& \\ & \hspace{1.4 cm} \left( P-\underline{P}-Z \right)^T \left(A -BR^{-1}B^TP \right)-W\prec \mathbf{0} & \end{array}$\\
 \hline
 Nature & Sufficient & Sufficient \\
 \hline
 Conditions on $P$ & $P \succ \mathbf{0}$ & $Re(\lambda_i(P))>0$ $\forall$ $i$ \\
 \hline
 Applicability & \textbf{ $P$ must be symmetric positive-definite} & \bf{$P$ need not be symmetric positive-definite} \\
 \hline
Conditions on closed-loop matrix  & Hurwitz &  $A-BR^{-1}B^TP \prec \mathbf{0}$\\
\hline
Bound on cost & pre-specified $\gamma$ with $\mathbf{x}_0^T P \mathbf{x}_0 < \gamma >0 $& $\bar{\gamma}{(\mathbf{x}_0)} $ \\
\hline
Suboptimality quantification & \bf{absent}& \bf{$\bar{\gamma} (\mathbf{x}_0) $ computed explicitly} \\
 \hline
Extensibility & \textbf{not possible} & \textbf{straightforward} (see Remark \ref{rem_last})\\
\hline
\end{tabular}
}
\caption{Comparison with SOTA (suboptimal LQR perspective) }
\label{tab_2}
\end{table*}

\section{Numerical Examples}
\label{sec_4}
(All LMI-based problems are numerically solved using the CVX toolbox \cite{grant2008matlab}.)

In this section, we demonstrate the usability of the proposed results for two examples- suboptimal control design for an unstable LTI system in Example \ref{ex1} and suboptimal consensus protocol design for a multi-agent system in Example \ref{ex2}. Moreover, to ensure that the closed-loop matrix $A-BR^{-1}B^TP$ has real eigenvalues, we use the LMI \eqref{eq_real}, which has been derived by characterisation of the stability regions \cite{chilali1996h,kumar2019elimination}, in conjunction with the proposed LMIs.

 \begin{align}
\small{
 \label{eq_real}
 \begin{bmatrix}
     \mathbf{0} &  \left( AX-XA^T-BR^{-1}B^TY+Y^TBR^{-1}B^T \right) \\
             \left( AX-XA^T-BR^{-1}B^TY+Y^TBR^{-1}B^T \right)^T  & \mathbf{0}
           \end{bmatrix} \prec \mathbf{0}.
}%
\end{align}

\begin{example}{(Unstable LTI system)}
\label{ex1}
\normalfont
Consider the system \eqref{eq_sys} with $$A= \begin{bmatrix} 1 & 2 \\ 0 & 2\end{bmatrix}, B= \begin{bmatrix}4 & 2 \\ 0 & 2 \end{bmatrix} \text{ and } \mathbf{x}_0= \begin{bmatrix} 0.1  &-0.2
\end{bmatrix}^T;$$ and the associated cost \eqref{sec2_ihlqr_cost} with $Q=10I_{2 \times 2}$ and $R= 0.05I_{2 \times 2}$. The optimal control law for this problem is $$\mathbf{u}^*(t)=- \begin{bmatrix}
  13.6444 &   -4.5552 \\
    4.5446 &  14.4257
\end{bmatrix} \mathbf{x}^*(t)$$ with the optimal cost $J^*=0.0207$.
For this example, solving the LMIs \eqref{eqn_lmi_stability}, \eqref{eqn_lmi_upp_bd} and \eqref{eq_real} gives
$$ 
    X= \begin{bmatrix}
   101.0514   &-0.1038 \\
   -0.1038 & 104.9348
    \end{bmatrix}, Y=\begin{bmatrix}
    1.7197  & -1.3530 \\
   -1.7190  &  9.3904
    \end{bmatrix},$$
    $$
    W= \begin{bmatrix}
      109.3360  &  1.8990 \\
    1.8990  & 93.1398
    \end{bmatrix}, \underline{P}= \begin{bmatrix}
     0  & -0.0136 \\
   -0.1104  &  0.9284
    \end{bmatrix},
$$
yielding $$P= \begin{bmatrix}         0.0170  & -0.0129 \\
   -0.0169  &  0.0895
    \end{bmatrix}$$ and the suboptimal control law  $$\mathbf{u}(t)= - \begin{bmatrix}         1.3604 &  -1.0302 \\
    0.0034 &   3.0638 \end{bmatrix} \mathbf{x}(t).$$ The matrix $\hat{P}$ in \eqref{eq_Ph} is computed to be $$\hat{P}=\begin{bmatrix} 12.2894 & 0.2092 \\
   0.1955 & 11.2823 
\end{bmatrix}$$ yielding $\bar{\gamma} (\mathbf{x}_0)=1.1374$. The actual cost is computed to be $J=0.0627$.
\end{example}

Next, we demonstrate the application of the proposed technique to a suboptimal consensus protocol design problem, where we see the significance of having an asymmetric design matrix.   
\begin{example}{(Suboptimal Consensus Protocol)}
\label{ex2}
\normalfont
\begin{figure}[h]
    \centering
    \includegraphics[scale=0.8]{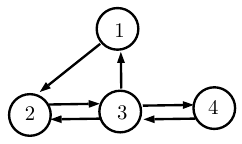}
    \caption{Interaction topology for Example \ref{ex2}}
    \label{fig_ex2}
\end{figure}
Consider a group of four scalar unstable agents with individual dynamics $\dot{x}_i(t)=x_i(t)+u_i(t)$, $i\in \{1,2,3,4\}$ with $x_1(0)=0.9$, $x_2(0)=1.0$, $x_3(0)=0.6$ and $x_4(0)=0.4$, communicating over the topology as shown in Figure \ref{fig_ex2} where the relative information $x_i(t)-x_j(t)$ is being shared among the agents and arrows indicate the direction of this information exchange. The objective is to compute a control protocol $\mathbf{u}(t)= \begin{bmatrix}u_1(t)& u_2(t)&u_3(t)&u_4(t) \end{bmatrix}^T$ to achieve a consensus (state-agreement), that is, $|x_i(t)-x_j(t)| \rightarrow 0$ as $t \rightarrow  \infty$ $\forall i \neq j, i,j \in \{1,2,3,4 \}$. Let the associated cost functional be considered as $ 
J= \int_0^{\infty}  10 \big(\left(x_1(t)-x_2(t) \right)^2+ \left( x_2(t)-x_3(t) \right)^2+ \left(x_3(t)-x_4(t) \right)^2 \big)+ 0.01 \left( u_1^2(t)+u_2^2(t)+u_3^2(t)+u_4^2(t) \right)    dt.
$ 

We formulate the problem as an LQ control design problem with structural conditions on the feedback-gain matrix \cite{kumar2023suboptimal}. Let us define error-vector $$\mathbf{e}(t)= \begin{bmatrix}
    e_1(t) \\ e_2(t) \\ e_3(t)
\end{bmatrix}= \begin{bmatrix}
    x_1(t)-x_2(t) \\ x_2(t)-x_3(t)\\ x_3(t)-x_4(t)
\end{bmatrix}.$$ 
The consensus achievement (that is state-agreement) is equivalent to the regulation of errors: $|e_i(t)-e_j(t)| \rightarrow 0$ for $i \neq j, i,j \in \{1,2,3\}$ as $t \rightarrow \infty$. With the information exchange as per the information topology Figure \ref{fig_ex2}, the feedback-gain matrix $K$ must have the structure as in \eqref{st_K}.

\begin{equation}
    \mathbf{u}(t)= -K \mathbf{e}(t)= \begin{bmatrix}
     k_1 & k_1 &0 \\
     k_2 & k_3 & 0 \\
     0 & k_4 & k_5 \\
     0& 0& k_6 
    \end{bmatrix} \mathbf{e}(t).
    \label{st_K}
    \end{equation}
    where $k_i$s are non-zero real parameters. Then, the problem is equivalent to computing a stabilising control law $\mathbf{u}=-K \mathbf{e}$ with a structured $K$ as in \eqref{st_K} for the system $\dot{\mathbf{e}} = A \mathbf{e} + B \mathbf{u}$, where $A=diag(1,1,1)$ and $$B= \begin{bmatrix}
        1 &-1&0&0\\
        0& 1 &-1& 0\\
        0 & 0& 1 & -1
    \end{bmatrix},$$ $\mathbf{e}_0 =\begin{bmatrix}-0.1 & 0.4 & 0.2
\end{bmatrix}^T$ with an associated cost $J= \int_{0}^{\infty} 10 \mathbf{e}^T \mathbf{e}+0.01 \mathbf{u}^T \mathbf{u} dt$ . With $K =- R^{-1}B^T P$, it is easily verified that $K$ has the structure in \eqref{st_K} only if the matrix $P$ has the structure in \eqref{st_P}.

    \begin{equation}
        P= \begin{bmatrix}
            p_1 & p_1 & 0\\
            0& p_2 & 0 \\
            0 & 0& p_3
        \end{bmatrix}
        \label{st_P}
    \end{equation}
    where $p_i$s are non-zero real numbers. Thus, for this problem, the design matrix $P$ has to be asymmetric, rendering the standard symmetric positive-definite Lyapunov matrix unusable and thus accentuating the relevance of the results presented in this work. To use Theorem \ref{thm1} for this problem, we translate the structural conditions on $P$ in \eqref{st_P} onto the matrices $X$ and $Y$ using expression $P=Y X^{-1}$- by imposing $X$ to be a scalar matrix (that is a diagonal matrix with all entries being identical) and $Y$ to have a structure identical to that of $P$ (\eqref{st_P}). With these conditions, solving LMIs \eqref{eqn_lmi_stability}, \eqref{eqn_lmi_upp_bd} and \eqref{eq_real} yields a solution detailed in \eqref{ex2_sol}. With this solution, $$K= \begin{bmatrix}
   -3.4981  & -3.4981   &      0 \\
    3.4981 &  -6.9962 &        0\\
         0  & 10.4943 & -10.4943\\
         0      &   0&   10.4943
         \end{bmatrix}$$ and
 $\bar{\gamma} (\mathbf{x}_0)=1.8426$. The cost associated with this protocol is computed to be $J= 0.1407$.
\end{example}

 \begin{equation}
\begin{gathered} 
 X= 19.4505 \  I_{3 \times 3}, Y=\begin{bmatrix}
    0.6804   & 0.6804 &        0 \\
   0   & 2.0412 &        0\\
   0    &     0  &  2.0412
  \end{bmatrix},
    P= \begin{bmatrix}
          0.0350 &   0.0350 &         0\\
   0   & 0.1049     &   0 \\
   0  &       0  &  0.1049   \end{bmatrix},\\
         \underline{P}= \begin{bmatrix}
             0.0501  &  0.0528  &  0.0207 \\
    0.0418   & 0.0919  &  0.0442 \\
    0.0195   & 0.0459 &   0.0678
         \end{bmatrix}, \text{ and } \hat{P}=\begin{bmatrix}
 4.5077 & 1.4237 & 0.74569\\
 1.4237& 2.4549&   1.2849\\
0.74569& 1.2849&  1.7771 
          \end{bmatrix}.
\end{gathered}
. \hrulefill
  \label{ex2_sol}
          \end{equation}

\section{Conclusion}
\label{sec_5}
This work proposes a new Lyapunov-like stability inequality for a class of matrices wherein the constituent Lyapunov matrix is not assumed to be symmetric at the outset. The usage of the proposed stability and stabilisation conditions is demonstrated in the numerical examples, wherein the control laws as well as the bounds on the associated cost are computed. It is clear from the second numerical example that an asymmetry of the design matrix may be an inherent requirement in the problem definition, thus necessitating the computation of an asymmetric Lyapunov matrix. The future work includes the exploration of this fact to more consensus protocol design problems and to other structural control design problems in networked control.


\bibliographystyle{ieeetr}
 \bibliography{ref2}

\begin{thebibliography}{10}

\bibitem{khalil}
H.~K. Khalil, {\em Nonlinear Systems}.
\newblock Prentice-Hall, 1996.

\bibitem{khlebnikov2011optimization}
M.~V. Khlebnikov, B.~T. Polyak, and V.~M. Kuntsevich, ``Optimization of linear
  systems subject to bounded exogenous disturbances: The invariant ellipsoid
  technique,'' {\em Automation and Remote Control}, vol.~72, no.~11,
  pp.~2227--2275, 2011.

\bibitem{christofides2011networked}
P.~D. Christofides, J.~Liu, and D.~M. De~La~Pena, {\em Networked and
  distributed predictive control: Methods and nonlinear process network
  applications}.
\newblock Springer Science \& Business Media, 2011.

\bibitem{khlebnikov2020linear}
M.~Khlebnikov, ``Linear matrix inequalities in control problems,'' {\em
  Differential Equations}, vol.~56, pp.~1496--1501, 2020.

\bibitem{boyd1994linear}
S.~Boyd, L.~El~Ghaoui, E.~Feron, and V.~Balakrishnan, {\em Linear matrix
  inequalities in system and control theory}.
\newblock SIAM, 1994.

\bibitem{khlebnikov2015linear}
M.~V. Khlebnikov, P.~S. Shcherbakov, and V.~N. Chestnov, ``Linear-quadratic
  regulator. {I}. a new solution,'' {\em Automation and Remote Control},
  vol.~76, no.~12, pp.~2143--2155, 2015.

\bibitem{terrell2009stability}
W.~J. Terrell, {\em Stability and Stabilization: An Introduction}.
\newblock Princeton University Press, 2009.

\bibitem{athans2007optimal}
M.~Athans and P.~L. Falb, {\em Optimal control: an introduction to the theory
  and its applications}.
\newblock Courier Corporation, 2007.

\bibitem{kumar2019some}
A.~Kumar and T.~Jain, ``Some insights on synthesizing optimal linear quadratic
  controllers using {K}rotov sufficient conditions,'' {\em IEEE Control Systems
  Letters}, vol.~4, no.~2, pp.~486--491, 2020.

\bibitem{datta2014feedback}
S.~Datta and D.~Chakraborty, ``Feedback norm minimisation with regional pole
  placement,'' {\em International Journal of Control}, vol.~87, no.~11,
  pp.~2239--2251, 2014.

\bibitem{kumar2023suboptimal}
A.~Kumar and T.~Jain, ``Suboptimal consensus protocol design for a class of
  multiagent systems,'' {\em Journal of the Franklin Institute}, vol.~360,
  no.~18, pp.~14553--14566, 2023.

\bibitem{jiao2019suboptimality}
J.~Jiao, H.~L. Trentelman, and M.~K. Camlibel, ``A suboptimality approach to
  distributed linear quadratic optimal control,'' {\em IEEE Transactions on
  Automatic Control}, vol.~65, no.~3, pp.~1218--1225, 2019.

\bibitem{chilali1996h}
M.~Chilali and P.~Gahinet, ``$\mathcal{H}_{\infty}$ design with pole placement
  constraints: an {LMI} approach,'' {\em IEEE Transactions on automatic
  control}, vol.~41, no.~3, pp.~358--367, 1996.

\bibitem{polyak2021linear}
B.~T. Polyak, M.~V. Khlebnikov, and P.~S. Shcherbakov, ``Linear matrix
  inequalities in control systems with uncertainty,'' {\em Automation and
  Remote Control}, vol.~82, pp.~1--40, 2021.

\bibitem{grant2008matlab}
M.~Grant, S.~Boyd, and Y.~Y. Cvx, ``Matlab software for disciplined convex
  programming,'' 2008.

\bibitem{kumar2019elimination}
A.~Kumar and S.~Datta, ``Elimination of expensive sensors in static state
  feedback control with specified transient behaviour,'' in {\em 2019 Fifth
  Indian Control Conference (ICC)}, pp.~74--78, IEEE, 2019.

\end{thebibliography}



\end{document}